\newtheorem{thm}{Theorem}[section]
\newtheorem{lem}[thm]{Lemma}
\newtheorem{cor}[thm]{Corollary}
\theoremstyle{definition}
\newcommand{\BZ}{\mathbb{Z}}
\newcommand{\BN}{\mathbb{N}}
\newcommand{\factor}[2]{{\raise0.7ex\hbox{$#1$} \!\mathord{\left/ {\vphantom {#1 {#2}}}\right.\kern-\nulldelimiterspace}\!\lower0.7ex\hbox{${#2}$}}}
\DeclareMathOperator{\BS}{BS}
\title[Baumslag-Solitar Groups]{Two Remarks on First-Order Theories of Baumslag-Solitar Groups
\thanks{ Work done during a visit to the Omsk Branch of the Sobolev Mathematical Institute of the Siberian Branch of the Russian Academy of Science. The authors thank the staff of the Mathematical Institute for their hospitality and especially V. N. Remeslennikov.}}
\author[M. Casals-Ruiz]{Montserrat Casals-Ruiz}
\address{Department of Mathematics, 1326 Stevenson Center, Vanderbilt University, Nashville, TN 37240, USA}
\email{montsecasals@gmail.com}
\thanks{The first author is supported by Programa de Formaci\'{o}n
de Investigadores del Departamento de Educaci\'{o}n, Universidades e Investigaci\'{o}n del Gobierno Vasco}
\author[I. Kazachkov]{Ilya Kazachkov}
\email{ilya.kazachkov@gmail.com}
\thanks{The second author is supported by NSERC Postdoctoral Fellowship.}
\subjclass[2010]{Primary 20E34, 03C60, 20F16}
\keywords{Baumslag-Solitar groups, elementary equivalence of groups, solvable groups}
\begin{document}

\begin{abstract}
In this note we characterise all finitely generated groups elementarily equivalent to a solvable Baumslag-Solitar group $\BS(m,1)$. It turns out that a finitely generated group $G$ is elementarily equivalent to $\BS(m,1)$ if and only if $G$ is isomorphic to $\BS(m,1)$.

Furthermore, we show that two Baumslag-Solitar groups are elementarily equivalent if and only if they are isomorphic.
\end{abstract}
\maketitle

\section{Introduction}

Classification of objects up to an equivalence relation is one the main problems in mathematics. The equivalence relation is determined by the type of property one would like to be preserved, namely in model theory one classifies modulo elementary theory; in algebra, modulo isomorphism; in geometric group theory, modulo quasi-isometries, etc. 
In this note we are concerned with the model-theoretic viewpoint, i.e. classification modulo elementary equivalence.

To determine which groups are elementarily equivalent to a given one is usually very hard. The only such example (aside from finite groups) is the classification of groups elementarily equivalent to an abelian group, this is a well-known result of  W.~Szmielew, \cite{Sz}.

When we consider only finitely generated groups, the problem is yet very hard. Here, the only known examples are the classification of finitely generated nilpotent groups and the recent solution of Tarski's problems. A theorem of  F.~Oger, \cite{Oger}, states that two finitely generated nilpotent groups $G$ and $H$ are elementarily equivalent if and only if $G\times \BZ\simeq H\times \BZ$.  In his paper \cite{Bel94}, O. Belegradek characterised groups that are elementarily equivalent to the group of unitriangular matrices $UT_n(R)$, where $R$ is an associative unital ring. In paper \cite{M33}, A. Miasnikov gave a description of groups elementarily equivalent to a given finitely generated nilpotent $K$-group, where $K$ is a field of characteristic zero. Recently, A. Miasnikov and M. Sohrabi, classified groups elementarily equivalent to a free nilpotent $R$-group of finite rank, where $R$ is a domain of characteristic zero, see \cite{MS}.

In their work O.~Kharlampovich and A.~Miasnikov, \cite{KhMTar}, and Z.~Sela, \cite{SelaTar} classify finitely generated groups elementarily equivalent to the free group. Sela's methods successfully generalise to all torsion-free hyperbolic groups, \cite{SelaHyp}.

In this note, we give the first example of a classification of finitely generated groups elementarily equivalent to some solvable groups, namely solvable Baumslag-Solitar groups. We prove
\begin{thm} \label{thm}
Let $\BS(1,n)$ be a Baumslag-Solitar group, $n\in \BZ$. Let $G$ be a finitely generated group elementarily equivalent to $\BS(1,n)$. Then $G$ is isomorphic to $\BS(1,n)$.
\end{thm}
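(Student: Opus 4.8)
The plan is to recover the defining presentation of $\BS(1,n)$ inside $G$ by isolating a definable normal abelian subgroup playing the role of $N=\BZ[1/n]=\ncl(a)$ and then reconstructing the cyclic action on it. Throughout I would assume $|n|\ge 2$; the degenerate cases $n\in\{-1,0,1\}$ give $\BZ$, $\BZ$ and the Klein bottle group, which I would dispose of separately, since there $G$ is forced to be (virtually) abelian by a universally quantified sentence and one may appeal to the classification of finitely generated abelian/polycyclic groups up to elementary equivalence.

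First I would extract the soft, purely first-order information. Metabelianness is the universal sentence $\forall x_1x_2x_3x_4\,[[x_1,x_2],[x_3,x_4]]=1$, so $G$ is metabelian. A short computation in $\BZ[1/n]\rtimes_n\BZ$ shows $\BS(1,n)$ has commutator width $1$, and ``the set of commutators is closed under multiplication'' is first-order; hence in $G$ the formula $\mathrm{Comm}(x):\exists y,z\ x=[y,z]$ defines $G'$. Consequently $G^{ab}=G/G'$ is interpreted by the same scheme in both groups, so $G^{ab}\equiv\BS(1,n)^{ab}=\BZ\times\BZ/(n-1)$; since finitely generated abelian groups are determined up to isomorphism by their Szmielew invariants, $G^{ab}\cong\BZ\times\BZ/(n-1)$. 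I then set $N=\{x\in G:x^{n-1}\in G'\}$, the definable preimage of the torsion of $G^{ab}$; this forces $G/N\cong\BZ$. Transferring the relevant sentences about the definable set $N$ from $\BS(1,n)$ shows that in $G$ the subgroup $N$ is abelian, torsion-free and uniquely $n$-divisible, that $C_G(N)=N$ (so $G/N$ acts faithfully on $N$), and that there exists $t_0\notin N$ with $t_0xt_0^{-1}=x^n$ for all $x\in N$, i.e. conjugation by $t_0$ is the map $[n]$ of multiplication by $n$.

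Next I would bring in finite generation, which is not first-order and must be used externally. By P.~Hall's theorem a finitely generated metabelian group has $G'$ finitely generated as a $\BZ[G^{ab}]$-module, whence so is $N$. Since $N$ is abelian the action of $G^{ab}$ on $N$ factors through $G/N=\BZ=\langle\bar t\rangle$, so $N$ is finitely generated over $R=\BZ[t^{\pm1}]$, with $t$ acting by $\tau$. Writing $m=\bar t_0$ we have $\tau^m=[n]$; as $R$ is free of rank $m$ over $\BZ[t_0^{\pm1}]=\BZ[\tau^{\pm m}]$ and $t_0$ acts as the scalar $n$, the action factors through $\BZ[t_0^{\pm1}]\twoheadrightarrow\BZ[1/n]$. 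Therefore $N$ is a finitely generated module over the PID $\BZ[1/n]$, and being torsion-free it is free: $N\cong\BZ[1/n]^{\,r}$.

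The whole difficulty is now concentrated in showing $r=1$, and this is the step I expect to be the main obstacle. One cannot use only that $N\equiv\BZ[1/n]$ as an abelian group, since by Szmielew that elementary class is enormous; moreover there is a genuine look-alike $G_2=\BZ[1/n]^2\rtimes_\sigma\BZ$, with $\sigma$ the companion matrix of $x^2-n$, in which $\sigma^2=[n]$ so that $G_2$ satisfies every sentence used above (including the existence of a $t_0$ acting by $[n]$), yet $G_2\not\cong\BS(1,n)$. The resolution I propose is to transfer the finer invariant $\dim_{\mathbb{F}_p}(N/pN)=1$ for a prime $p\nmid n$: the $p$-th power subgroup $pN$ is definable, the inequalities ``$\dim\ge k$'' are first-order, and $\BZ[1/n]/p\cong\mathbb{F}_p$ gives value $1$ in $\BS(1,n)$ while $G_2$ gives $2$. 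Since $N\cong\BZ[1/n]^{\,r}$ yields $\dim_{\mathbb{F}_p}(N/pN)=r$, transfer forces $r=1$, i.e. $N\cong\BZ[1/n]$. Finally I would assemble: $\mathrm{Aut}(\BZ[1/n])=\{\pm n^{j}\}$, in which $[n]$ is a primitive element of the free part, so the faithful image of $G/N$ is $\langle[n]\rangle$ and a generator $t$ of $G/N$ may be chosen with $txt^{-1}=x^n$ for all $x\in N$; the extension $1\to N\to G\to\BZ\to1$ splits because $\BZ$ is free, and thus $G\cong\BZ[1/n]\rtimes_n\BZ=\BS(1,n)$.
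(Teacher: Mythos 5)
Your overall strategy is the same as the paper's: isolate a definable normal abelian subgroup $N$ with $G/N\cong\BZ$, use finite generation (via Hall's theorem) to force $N\cong\BZ[\frac1n]$, and then pin down the action of $G/N$ on $N$. Several intermediate steps are in fact carried out more carefully than in the paper: the $\mathbb{F}_p$-dimension transfer that forces rank $1$ is the precise version of the paper's ``$\dim(A/kA)$'' argument, and the transferred sentence producing $t_0$ with $x^{t_0}=x^n$ on all of $N$ automatically eliminates the ``$k/l$''-type look-alikes $G_{k,l}$, which the paper has to exclude with a separate sentence $\Phi_n$.

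The final step, however, contains a genuine gap. The claim that $\mathrm{Aut}(\BZ[\frac1n])=\{\pm n^{j}\}$ is correct only when $n$ is $\pm$ a prime (the unit group of $\BZ[\frac1n]$ is $\{\pm1\}\times\BZ^{s}$, $s$ the number of prime divisors of $n$), and, more damagingly, the claim that $[n]$ is primitive in the free part fails whenever $n$ is a proper power. Concretely, take $n=4$ and $G=\BS(1,-2)=\BZ[\frac12]\rtimes_{-2}\BZ$. This group satisfies every sentence you transfer from $\BS(1,4)$: it is torsion-free and metabelian, its commutators form a subgroup, $G^{ab}\cong\BZ\times\BZ/3$, the definable subgroup $N=\{x:x^{3}\in G'\}$ equals $\BZ[\frac12]$ and is normal, abelian, self-centralising and uniquely $4$-divisible, the element $t_0=a^{2}$ acts on $N$ by $x\mapsto x^{4}$, and $\dim_{\mathbb{F}_p}(N/pN)=1$ for odd $p$. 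Yet $\BS(1,-2)\not\cong\BS(1,4)$, so the conclusion ``a generator of $G/N$ may be chosen to act by $[n]$'' does not follow: the image of $G/N$ in $\mathrm{Aut}(N)$ may be $\langle[-2]\rangle$, which contains $[4]$ but not as a generator. What your constraints actually give is that a generator acts by a unit $u$ of $\BZ[\frac1n]$ with $n\in\langle u\rangle$ and $\BZ[\frac1n]/(u-1)\cong\BZ/(n-1)$; when $n$ is (up to sign) a proper power this admits solutions other than $u=n^{\pm1}$, yielding candidate groups $\BS(1,k)$ with $(\pm k)^{m}=n$. To finish, you must separate each of these from $\BS(1,n)$ by a first-order sentence, exactly as the paper does for its residual candidates $\BS(1,-n)$ and $G_{k,l}$ by invoking Theorem \ref{thm0} and the sentence $\Phi_n$; Theorem \ref{thm0} does supply the needed existential separation here. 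A smaller issue: your disposal of $n=-1$ (the Klein bottle group) appeals to a ``classification of finitely generated polycyclic groups up to elementary equivalence'' that does not exist; that case needs its own argument.
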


If one considers the problem of classification of groups from a more specific class, more examples are known. 
\begin{itemize}
\item A classical result of A.~Malcev, \cite{Mal}, states that two classic linear groups ($SL$, $PSL$, $GL$, $PGL$) over a field are elementarily equivalent if and only if they are isomorphic. Malcev's results was generalised to other linear, algebraic and Chevalley groups by A.~Mikhal\"{e}v and E.~Bunina.
\item In \cite{CKR}, the authors showed that two right-angled Coxeter groups are elementarily equivalent if and only if they are isomorphic.
\item For solvable groups the only two results known to the authors are: a theorem of P.~Rogers, H.~Smith and D.~Solitar stating that finitely generated free solvable groups are elementarily equivalent if and only if they are isomorphic, see \cite{RSS}; and a theorem of Ch.~Gupta and E.~Timoshenko stating that two partially commutative metabelian groups are elementarily equivalent if and only if they are isomorphic, see \cite{GT}. 
\end{itemize}

The second result of this note is the classification of Baumslag-Solitar groups modulo elementary theories. We prove
\begin{thm} \label{thm0}
Two Baumslag-Solitar  groups $\BS(m,n)$ and $\BS(k,l)$ are elementarily (in fact $\forall\exists$-) equivalent if and only if there exists $\epsilon =\pm 1$, such that $m=\epsilon k$, $n=\epsilon l$ or $m=\epsilon l$, $n=\epsilon k$.
\end{thm}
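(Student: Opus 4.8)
The plan is to collapse the two notions of equivalence at once: a universal sentence is the negation of an existential one, so $\BS(m,n)$ and $\BS(k,l)$ are universally equivalent if and only if they are existentially equivalent, and I may freely use sentences of either shape. The backward implication is immediate, since the three listed pairings are exactly the isomorphisms $\BS(m,n)\cong\BS(n,m)\cong\BS(-m,-n)\cong\BS(-n,-m)$ realised by $t\mapsto t^{-1}$ and by $a\mapsto a^{-1}$, and isomorphic groups are elementarily, hence existentially and universally, equivalent. Thus the entire content is the forward direction: universal equivalence must recover the unordered pair $\{m,n\}$ up to a global sign, which is precisely the isomorphism invariant of a Baumslag--Solitar group.

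To read off that invariant I would exploit the action of $\BS(m,n)=\langle a,t\mid ta^mt^{-1}=a^n\rangle$ on its Bass--Serre tree, with vertex group $\langle a\rangle\cong\BZ$ and edge groups $m\BZ,n\BZ$. Translation length is a conjugacy invariant with $\ell(x^d)=|d|\,\ell(x)$, so any solution of $yx^py^{-1}=x^q$ with $|p|\neq|q|$ forces $\ell(x)=0$, i.e.\ $x$ is elliptic and hence conjugate to a power $a^r$. Every such equation therefore reduces, through Britton's Lemma, to the classical description of conjugacy of powers of $a$: one has $a^u\sim a^v$ exactly when $v$ is reached from $u$ by a chain of moves $u\mapsto (n/m)u$ (legal when $m\mid u$) and $u\mapsto(m/n)u$ (legal when $n\mid u$).

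With this reduction I would introduce two families of sentences. First, the existential $\Theta_{p,q}\colon \exists x\,\exists y\,(x\neq 1\wedge yx^py^{-1}=x^q)$, with $p,q\in\BZ$ of either sign: by the reduction $\Theta_{p,q}$ holds in $\BS(m,n)$ precisely when the signed ratio $q/p$ lies in $\langle n/m\rangle\le\BQ^\times$, so the truth-values of the $\Theta_{p,q}$ recover $n/m$ up to inversion, sign included. Inversion corresponds to the symmetry $(m,n)\mapsto(n,m)$, and the sign records whether $m,n$ agree or differ in sign. This ratio alone cannot see the scale (it fails, e.g., to separate $\BS(2,3)$ from $\BS(4,6)$), so I would use, second, the universal implications $\Psi_{(p,q),(p',q')}\colon \forall x\,\forall y\,\big(yx^py^{-1}=x^q\Rightarrow yx^{p'}y^{-1}=x^{q'}\big)$, which detect divisibility. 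For instance $yx^ky^{-1}=x^l$ is witnessed by the defining relation of $\BS(k,l)$, whereas $yx^{k/d}y^{-1}=x^{l/d}$ fails there for any $d\mid\gcd(k,l)$ with $d>1$, since $k/d$ is not divisible by $k$; so the corresponding $\Psi$ separates $\BS(k,l)$ from $\BS(k/d,l/d)$. Running this over all pairs proportional to the known ratio isolates the \emph{minimal} conjugate pair of powers of $a$, which is exactly $(m,n)$ up to the symmetries (below $\min(|m|,|n|)$ every power of $a$ is isolated, so the smallest non-isolated pair is $\{m,n\}$); combined with the ratio this pins $\{m,n\}$ down up to global sign.

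The hard part will be the Britton's-Lemma bookkeeping needed to evaluate the $\Psi_{(p,q),(p',q')}$: I must show that whenever $yx^py^{-1}=x^q$ holds, the conjugator $y$ is determined modulo the centraliser of $x$ and acts on the neighbouring powers of $a$ as the \emph{same} composite of $\times(n/m)$ and $\times(m/n)$ moves, so that the implication genuinely tracks divisibility uniformly and is not defeated by an exotic conjugator. This rests on the uniqueness of the path between two vertices of a tree and on the centraliser of a nontrivial power of $a$ being $\langle a\rangle$. Finally I would treat the degenerate regimes separately: if $|m|=1$ or $|n|=1$ the group is metabelian, a universal property (the metabelian law) that the non-metabelian $\BS(k,l)$ violate since they contain a free subgroup of rank two, so equivalence cannot cross this divide; and if $|m|=|n|$ the centre is nontrivial, with the sign of the ratio already distinguishing $\BS(m,m)$ from $\BS(m,-m)$. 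In each regime the pair of families $\{\Theta_{p,q}\}$ and $\{\Psi_{(p,q),(p',q')}\}$ recovers $\{m,n\}$ up to the four symmetries, which is exactly the stated arithmetic condition.
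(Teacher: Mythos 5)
Your reduction to elliptic elements via translation lengths, and the existential sentences $\Theta_{p,q}$ recovering the subgroup $\langle n/m\rangle\le\BQ^{\times}$, are sound and essentially coincide with the paper's use of the formulas $\Upsilon_{m,n}(x):\exists y\,(y^{-1}x^{m}y=x^{n})$ together with the fact that any solution is conjugate to a power of $b$. The genuine gap is in your scale-detection step. The universal implication $\Psi_{(k,l),(k/d,l/d)}$ is \emph{not} true in $\BS(k/d,l/d)$, so it does not separate $\BS(k,l)$ from $\BS(k/d,l/d)$: the conjugator $y$ in $yx^{k}y^{-1}=x^{l}$ is determined only modulo the centraliser of $x^{k}$, and in a non-metabelian Baumslag--Solitar group the centraliser of $x^{k}$ is strictly larger than that of $x^{k/d}$, so the ``exotic conjugator'' you worry about really does defeat the implication. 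Concretely, in $\BS(2,3)=\langle a,b\mid a^{-1}b^{2}a=b^{3}\rangle$ take $x=b$ and $y=aba^{-2}$. Since $a^{-2}b^{4}a^{2}=b^{9}$ and $ab^{9}a^{-1}=b^{6}$, one gets $yb^{4}y^{-1}=ab^{9}a^{-1}=b^{6}$, so the hypothesis of $\Psi_{(4,6),(2,3)}$ holds; but $yb^{2}y^{-1}=ab\,(a^{-1}b^{3}a)\,b^{-1}a^{-1}$ is Britton-reduced (the exponents $3$ and $\pm 1$ are not multiples of $2$ and $3$ respectively), hence is not $b^{3}$. Thus $\Psi_{(4,6),(2,3)}$ fails in $\BS(2,3)$ as well as in $\BS(4,6)$ and separates nothing. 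The difficulty you defer to ``Britton's-Lemma bookkeeping'' is therefore not bookkeeping: the uniformity statement you need (every conjugator acts as the same composite of moves on all powers of $x$) is false, precisely because $C(b^{u})$ genuinely depends on $u$ (here $a^{2}ba^{-2}$ centralises $b^{4}$ but not $b^{2}$).

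For comparison, the paper detects the scale not through an implication quantified over conjugators but by comparing the \emph{truth sets} of $\Upsilon_{k,l}$ and $\Upsilon_{dk,dl}$ inside a single group: in $\BS(dk,dl)$ the element $b$ satisfies $\Upsilon_{dk,dl}$ but not $\Upsilon_{k,l}$ (there $b^{k}$ and $b^{l}$ are not conjugate, $k$ being divisible by neither $dk$ nor $dl$), whereas in $\BS(k,l)$ every witness of $\Upsilon_{dk,dl}$ is conjugate to a power of $b$ and already witnesses $\Upsilon_{k,l}$; the separating sentence is of the form $\exists x\,(\Upsilon_{dk,dl}(x)\wedge\neg\Upsilon_{k,l}(x))$. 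You should note that this is a Boolean combination of existential formulas under an existential quantifier rather than a genuinely existential or universal sentence, so your instinct that the purely $\exists/\forall$ version of the theorem requires extra work at exactly this point is correct; but the $\Psi$-family must be replaced by something of this ``relative truth-set'' type, not repaired by more careful conjugator bookkeeping.
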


As a consequence, we recover the main result from \cite{Mold} and obtain the following
\begin{cor}
For two Baumslag-Solitar groups $\BS(m,n)$ and $\BS(k,l)$ the following conditions are equivalent:
\begin{enumerate}
\item  either $m=\epsilon k$ and $n=\epsilon l$, or $m=\epsilon l$ and $n=\epsilon k$, where $\epsilon=\pm 1$;
\item  $\BS(m,n)$ and $\BS(k,l)$ are isomorphic;
\item  $\BS(m,n)$ and $\BS(k,l)$ are elementarily equivalent;
\end{enumerate}
\end{cor}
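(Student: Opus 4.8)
The plan is to establish the cyclic chain of implications $(1)\Rightarrow(2)\Rightarrow(3)\Rightarrow(4)\Rightarrow(1)$, in which the only substantial step is supplied by Theorem~\ref{thm0}.

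For $(1)\Rightarrow(2)$ I would write down explicit isomorphisms realising the two elementary symmetries of the presentation $\BS(m,n)=\langle a,t \mid ta^mt^{-1}=a^n\rangle$. The assignment $a\mapsto a^{-1}$, $t\mapsto t$ carries the defining relation to its inverse and so yields $\BS(m,n)\cong\BS(-m,-n)$, while the assignment $a\mapsto a$, $t\mapsto t^{-1}$ rewrites $ta^mt^{-1}=a^n$ as $t^{-1}a^nt=a^m$ and so yields $\BS(m,n)\cong\BS(n,m)$. Composing these two isomorphisms as needed, each of the four numerical possibilities allowed by (1), namely $(m,n)=\pm(k,l)$ and $(m,n)=\pm(l,k)$, produces an isomorphism $\BS(m,n)\cong\BS(k,l)$; this is precisely the isomorphism classification recovered in \cite{Mold}.

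The implication $(2)\Rightarrow(3)$ is the standard model-theoretic fact that isomorphic structures satisfy the same first-order sentences and hence are elementarily equivalent, and $(3)\Rightarrow(4)$ is immediate because every existential (respectively universal) sentence is in particular a first-order sentence, so that elementary equivalence forces agreement on the existential (universal) fragment. Finally, $(4)\Rightarrow(1)$ is exactly the content of the forward direction of Theorem~\ref{thm0}, which I would simply quote.

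In this scheme all the genuine difficulty is isolated in Theorem~\ref{thm0}: the remaining implications are either a one-line group-theoretic computation with the defining relation or a formal consequence of the definitions of the various equivalences. I therefore expect no serious obstacle in the corollary itself; its purpose is to combine Theorem~\ref{thm0} with the explicit isomorphisms of the first step into a single clean statement, and the only point requiring care is verifying that the two presentation symmetries above indeed generate all of the numerical cases listed in (1).
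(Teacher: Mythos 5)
Your proposal is correct and matches the paper's (implicit) argument exactly: the paper derives this corollary from Theorem~\ref{thm0} together with the observation that $\BS(m,n)\simeq\BS(n,m)$ and $\BS(m,n)\simeq\BS(-m,-n)$, which is precisely your cycle $(1)\Rightarrow(2)\Rightarrow(3)\Rightarrow(4)\Rightarrow(1)$ with all the substance residing in $(4)\Rightarrow(1)$.
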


Recall that a Baumslag-Solitar group is a one-relator group given by the following presentation
$$
\BS(m,n)=\langle a, b\mid a^{-1}b^m a =b^n\rangle,
$$
where $m,n\in\BZ$, $m,n \ne 0$. Baumslag-Solitar groups were introduced in \cite{BS} as simple examples of non-Hopfian groups and since then they served as a ground for new ideas in group theory and as a testbed for theories and techniques. They were shown to have many remarkable properties. For instance, they are the simplest groups to have an exponential isoperimetric function, they were the first known groups to be asynchronously automatic but not automatic etc.

We assume that the reader is familiar with basic definitions and results from model theory and logic. We refer the reader to \cite{hodges} for notions not defined here.

We would like to thank the referee for his attention and care when reading our manuscript. We also thank D. Osin, who has pointed out to us that a result analogous to Theorem \ref{thm} was proved earlier by A. Nies, see \cite{Nies}. We are grateful to Gilbert Levitt, who pointed out a mistake in a previous version of the paper. 

\section{Proof of Theorems \ref{thm0} and \ref{thm}}  
It is clear that $\BS(m,n)\simeq \BS(n,m)$ and $\BS(m,n)\simeq \BS(-m,-n)$.  If $n=1$ and $m=1$, then $\BS(1,1)$ is abelian, $\BS(1,1)\simeq \BZ^2$. It is well-known that a finitely generated group $G$ is universally equivalent to $\BZ^2$ if and only if $G$ is a free abelian group of finite rank. Furthermore, by  Szmielew's theorem, a finitely generated group $G$ is elementarily equivalent to $\BS(1,1)$ if and only if $G\simeq \BS(1,1)\simeq \BZ^2$, see \cite{Sz}. Hence, without loss of generality, we further consider only the groups $\BS(m,n)=\langle a, b\mid a^{-1}b^m a=b^n\rangle$,  where $|m|\ge |n|$, $n>0$ and $mn\ne 1$.

\subsection{Proof of Theorem \ref{thm0}}
We define the first order formula $\Upsilon_{m,n} (x)$ in one free variable as follows:
$$
\Upsilon_{m,n} (x):
\begin{cases} 
\exists y \ (y^{-1} x^m y=x^n), & \text{ if $m\ne n$,}\\
\exists y \ ([y,x]\ne 1) \wedge (y^{-1} x^n y=x^n), &\text{ if $m=n$.}
\end{cases}
$$

The following lemma states two well-known facts about Baumslag-Solitar groups. Its proof is an application of the theory of HNN-extensions (see \cite[Propositions 1 and 2]{Mold} and \cite[Theorem 1]{An}).
\begin{lem}\label{lem00}\ 
\begin{enumerate}
\item  If $\BS(m,n)\models \Upsilon_{k,l}(g)$, then $g$ is conjugate to the element $b^p$ for some $p\in\BZ$.
\item  Let $m = m_1 d$, $n = n_1 d$, where $d = (m, n)$ is the greatest common divisor of $m$ and $n$, and let $p$ and $q$ be arbitrary different integers. Then the elements $b^p$ and $b^q$ are conjugate in $\BS(m, n)$ if and only if there exist integers $i$ and $r$, where $i> 0$, so that either $p=m_1^i d r$ and $q=n_1^i d r$ or $p=n_1^i d r$ and $q=m_1^i d r$.
\end{enumerate}
\end{lem}

We define the first order existential formula $\Theta_{m,n} (x)$ in one free variable as follows:
$$
\Theta_{m,n} (x):
\begin{cases} 
\exists y \ (y^{-1} x^m y=x^n) \wedge \bigwedge\limits_{d|(m,n)} y^{-1}x^{\frac{m}{d}}y\ne x^{\frac{n}{d}}, & \text{ if $m\ne n$,}\\
\exists y \ (y^{-1} x^n y=x^n) \wedge \bigwedge\limits_{d|n, d>1} y^{-1}x^{\frac{m}{d}}y\ne x^{\frac{n}{d}}, &\text{ if $m=n$.}
\end{cases}
$$

Clearly, $\BS(m,n)\models \exists x \Theta_{m,n}(x)$, as it suffices to take the generators $a$ and $b$ of $\BS(m,n)$ as  witnesses. Consider the group $\BS(k,l)$ and suppose that $\BS(k,l)\models \exists x \Theta_{m,n}(x)$. By Lemma \ref{lem00}(1), if $\BS(k,l)\models \Upsilon_{m,n}(x)$, then $x$ is a conjugate of $b^p$ for some $p\in \BZ$. Without loss of generality, we shall assume that $x=b^p$ for some (non-zero)  $p\in \BZ$. By Lemma \ref{lem00}(2), the elements $b^{pm}$ and $b^{pn}$ are conjugate in $\BS(k,l)$ if and only if $pm=k_1^i (k,l) r$ and $pn=l_1^i (k,l) r$, where $i \in \BN$, $0<r\in \BZ$, $k=k_1(k,l)$ and $l=l_1(k,l)$. Since $(k_1,l_1)=1$, it follows that $p$ divides $(k,l) r$. Therefore, $k_1$ divides $m$ and $l_1$ divides $n$. An analogous argument shows that if $\BS(m,n)\models \exists x \Theta_{k,l}(x)$, then $m_1$ divides $k$ and $n_1$ divides $l$.

Consider the sentence 
$$
\Phi_{m,n}: \exists x \forall z \bigwedge\limits_{1\neq d|(m,n)}  x\neq z^d \wedge \Theta_{m,n}(x).
$$
By the above $\BS(m,n)\models \Phi_{m,n}$ and if $\BS(k,l) \equiv \BS(m,n)$, then $|m|=|k|$ and $n=l$. We are left observe that, by Lemma \ref{lem00}, $\BS(m,n)\models \exists x \Upsilon_{m,n}(x)$ and $\BS(m,n)\not\models \exists x \Upsilon_{-m,n}(x)$. Hence, $\BS(m,n)$ and $\BS(-m,n)$ are not universally (existentially) equivalent. This finishes the proof of Theorem \ref{thm0}.

\subsection{Proof of Theorem \ref{thm}}
Recall that, without loss of generality, we consider only those Baumslag-Solitar groups $\BS(m,n)$ for which $|m|\ge n>0$. A Baumslag-Solitar group $\BS(m,n)$ is solvable if and only if it is metabelian if and only if $n=1$. 

For a group $\BS(m,1)$ there is an obvious homomorphism onto the infinite cyclic group, obtained by setting $b=1$. Standard techniques show that the kernel of this homomorphism is the commutator subgroup $\BS(m,1)'$ of $\BS(m,1)$, is generated by the elements of the form $a^{-i} b a^i$, where $i\in \BN$ and is isomorphic to the additive group of $m$-adic rational numbers $\BZ[\frac{1}{m}]$, see \cite{Col}.  Furthermore, for any $x\in \BS(m,1)$ the centraliser 
$$
C(x)= 
\begin{cases} 
\BS(m,1)', & \text{if $x\in\BS(m,1)'$,}
\\
\text{cyclic, } &\text{ otherwise.}
\end{cases}
$$

In the following lemma, we recall two well-known facts from model theory, see \cite{hodges}. 
\begin{lem} \label{lem0} \ 
\begin{enumerate}
\item 
Let $G$ be a group and let $N$ be a normal subgroup of $G$. Suppose that $N$ is interpretable in $G$. Then $\factor{G}{N}$ is interpretable in $G$.
\item 
Let $G_1$ and $G_2$ be elementarily equivalent groups. Let $H_1$ be interpretable in $G_1$ and $H_2$ be interpretable in $G_2$ using the same formulas, then $H_1$ is elementarily equivalent to $H_2$.
\end{enumerate}
\end{lem}

\begin{lem} \label{lem1} \
\begin{enumerate} 
\item Let $G$ be a group. There exists a first-order formula $\Psi_m(x)$ in the language of groups such that the truth set of $\Psi_m(x)$ is the set of all elements $x$ of $G$ such that the centraliser of $x$  is a normal, $m$-divisible, abelian subgroup containing the commutator subgroup $G'$.
\item The truth set of $\Psi_m(x)$ in $\BS(m,1)$ is $C(b)=\BS(m,1)'$.
\end{enumerate}
\end{lem}
\begin{proof}
We construct the formula $\Psi_m(x)$:
\begin{itemize}
\item $\forall y \ ([y,x]=1 \to \exists z \ y=z^m)$ - the centraliser of $C(x)$ is $m$-divisible;
\item $\forall y, z \  [x,[y,z]]=1$ - the centraliser of $x$ contains the commutator subgroup;
\item $\forall y, z \  [y,x]=1\to [y^z, x]=1$ - the centraliser of $x$ is a normal subgroup;
\item $\forall y, z \  (([y,x]=1 \wedge [z,x]=1) \to [z,y]=1)$ - the centraliser of $x$ is abelian.
\end{itemize}

From the description of centralisers in $\BS(m,1)$, it follows that the truth set of $\Psi_m(x)$ in $\BS(m,1)$ is the commutator subgroup $\BS(m,1)'=C(b)$.
\end{proof}

\begin{cor}
Let $\BS(m,1)$ be a non-abelian Baumslag-Solitar group. Then the subgroups $C(b)=\BS(m,1)'$ and $\factor{\BS(m,1)}{\BS(m,1)'}$ are interpretable in $\BS(m,1)$.
\end{cor}

Let now $G$ be a finitely generated group elementarily equivalent to $\BS(m,1)$. Since the properties ``to be metabelian'' and ``to be torsion-free'' are first-order, it follows that $G$ is metabelian and torsion-free. Furthermore, since $\BS(m,1)\models \forall x\forall y (\Psi_m(x) \wedge \Psi_m(y) \to [x,y]=1)$ and since $G$ is elementarily equivalent to $\BS(1,n)$, the truth set $A\subset G$ of the formula $\Psi_m(x)$ in $G$ is a normal, abelian, $n$-divisible subgroup containing the commutator subgroup $G'$ of $G$. By Lemma \ref{lem1}, the subgroup $A$ is elementarily equivalent to $\BS(m,1)'$ and the factor group $Q=\factor{G}{C(x)}$ is elementarily equivalent to $\BZ$. We therefore, obtain the following short exact sequence
\begin{equation} \label{eq1}
1\to A \hookrightarrow G \twoheadrightarrow Q \to 1.
\end{equation}

The group $G$ is finitely generated if and only if $Q$ is a finitely generated abelian group and $A$ is finitely generated as a $Q$-module. By Szmielew's theorem, \cite{Sz}, since $Q$ is finitely generated and elementarily equivalent to $\BZ$, it follows that $Q\simeq \BZ$. Similarly, we have that $\dim (\factor{A}{kA})=\dim(\factor{\BZ[\frac{1}{m}]}{k\BZ[\frac{1}{n}]})$ for all $k$. By the structure theorem for divisible abelian groups, \cite{Fuchs1}, since $A$ is finitely generated as a $Q$-module, it follows that $A\simeq \BZ[\frac{1}{m}]$.

Hence, to understand the structure of $G$, it now suffices to understand the action of $Q$ on $A$. There are two cases to consider:
\begin{itemize}
\item a generator of $Q$ acts on $A$ by multiplication by $m$, $-m$, $\frac{1}{m}$ or $-\frac{1}{m}$,
\item or $m=kl$ and $|k|\ne 1\ne |l|$, and a generator of $Q$ acts on $A$ by multiplication with $k/l$.
\end{itemize}

In the first case, the group $G\simeq \BS(m,1)$ or $G\simeq \BS(-m,1)$; in the second case, the group $G$ is given by the following presentation, 
$$
G=G_{k,l}=\langle a,b\mid a^{-1}b^k a=b^l, [b, a^{-i} b a^i]=1, \hbox{ where } i> 0\rangle,
$$
see \cite{Gild, Bieri, BaumStreb}.

By Theorem \ref{thm0}, it follows that $\BS(-m,1)$ is not elementarily equivalent to $\BS(m,1)$. We show that neither is  $G_{k,l}$. Consider the formula
$$
\Phi_m: \  \forall x \, \Psi_m(x)\,  \exists  y \in \factor{G}{\Psi_m(x)} \ x^y = x^m.
$$
By Lemma \ref{lem0}, $\Phi_m$ is a first-order sentence in the language of groups. Obviously,  $\BS(m,1)\models \Phi_m$. Direct computations show that, on one hand, $G_{k,l}\models \Phi_m(b)$ and, on the other, $b^y \ne b^m$ for all $y\in \factor{G_{k,l}}{G_{k,l}'}$. Hence $G_{k,l}\not\models \Phi_m$. This finishes the proof of Theorem \ref{thm}.

\end{document}